\newcommand{\R}{\mathbb R}
\newcommand{\p}{\partial}
\newcommand{\Z}{\mathbb Z}
\newcommand{\C}{\mathbb C}
\renewcommand{\a}{\alpha}
\newcommand{\tres}{|\!|\!|}
\newcommand{\ji}{\langle}
\newcommand{\jd}{\rangle}
\newtheorem{theorem}{Theorem}[section]
\newtheorem{lemma}[theorem]{Lemma}
\theoremstyle{remark}
\newtheorem{remark}{Remark}[section]
\theoremstyle{definition}
\numberwithin{equation}{section}
\begin{document}
\title[Generalized derivative Schr\"odinger equation]{On a class of solutions to the generalized derivative Schr\"odinger equations}
\author{F.  Linares}
\address[F. Linares] {IMPA\\ Estrada Dona Castorina 110, Rio de Janeiro 22460-320, RJ Brazil}
\email{linares@impa.br}
\author{G. Ponce}
\address[G. Ponce]{Department  of Mathematics\\
University of California\\
Santa Barbara, CA 93106\\
USA}
\email{ponce@math.ucsb.edu}
\author{G.N. Santos}
\address[G.N. Santos]{Universidade Federal do Piau\'i - UFPI.
Campus Ministro Petr\^onio Portella\\
Teresina 64049-550, PI Brazil}
\email{gleison@ufpi.edu.br}

\keywords{}

\begin{abstract}

In this work we shall consider the initial value problem associated to the generalized derivative Schr\"odinger equations
\begin{equation*}
\p_tu=i\p_x^2u + \mu\,|u|^{\a}\p_xu, \hskip10pt x,t\in\R,  \hskip5pt 0<\a \le 1\;\; {\rm and}\;\; |\mu|=1,
\end{equation*}
and
\begin{equation*}
\p_tu=i\p_x^2u + \mu\,\p_x\big(|u|^{\a}u\big), \hskip10pt x,t\in\R,  \hskip5pt 0<\a \le 1\;\; {\rm and}\;\; |\mu|=1.
\end{equation*}
Following the argument introduced by Cazenave and Naumkin \cite{Cazenave} we shall establish the local well-posedness for
a class of small data in an appropriate weighted Sobolev space. The other main tools in the proof include the homogeneous
and inhomogeneous versions of the Kato smoothing effect for the linear Schr\"odinger equation established by Kenig-Ponce-Vega in \cite{KPV1}.
\end{abstract}

\maketitle


\large

\section{introduction}

We study the initial value problems (IVP) associated to the generalized nonlinear derivative Schr\"odinger equation,
\begin{equation}\label{gdnls}
\begin{cases}
\p_tu=i\,\p_x^2u + \mu\,|u|^{\a}\p_xu, \hskip15pt x,t\in\R, \hskip5pt 0<\a \le 1,\\
u(x,0)=u_0(x),
\end{cases}
\end{equation}
and
\begin{equation}\label{gdnls-b}
\begin{cases}
\p_tu=i\,\p_x^2u + \mu\,\p_x\big(|u|^{\a}u\big), \hskip15pt x,t\in\R, \hskip5pt 0<\a \le 1,\\
u(x,0)=u_0(x),
\end{cases}
\end{equation}
where $u$ is a complex valued function, $\mu\in \C$ with $|\mu|=1$.

The equation in \eqref{gdnls-b} generalized the well-known derivative nonlinear Schr\"odinger equation (DLNS),
\begin{equation}\label{dnls-0}
i\p_tu+\p_x^2u + i\p_x(|u|^2 u)=0, \hskip15pt x,t\in\R,
\end{equation}
which appears as a model in plasma physics and optics (\cite{Mio}, \cite{Mijolhus}, \cite{MMW}).  It is also an equation which is  exactly solvable by the inverse scattering technique, see \cite{KN}. The gauge transformation
\begin{equation}\label{dnls-1}
v(x,t)= u(x,t)\,\exp\Big(\frac{i}{2} \int\limits_{-\infty}^x |u(y,t)|^2\,dy\Big)
\end{equation}
allows to obtain the Hamiltonian form of the equation \eqref{dnls-0}, i.e.
\begin{equation}\label{dnls}
i\p_t v+\p_x^2v+i|v|^2\p_xv=0, \hskip10pt x, t\in\R.
\end{equation}
More precisely,
\begin{equation}\label{dnls-2}
\frac{dv(t)}{dt}= -i E'(v(t))
\end{equation}
where $E(v)$ is the energy of $v$ defined by
\begin{equation}\label{dnls-3}
E(v)(t)=\frac12 \int |\p_x v|^2\,dx+\frac14\, {\rm Im} \int |v|^2\bar{v}\p_xv\,dx.
\end{equation}

The IVP associated to \eqref{dnls} has been studied
in several publications  (see for instance \cite{Linares}, \cite{Iteam}, \cite{refinement}, \cite{HayashiII}, \cite{HayashiOzawa}, \cite{HayashiOzawa2}, \cite{Ozawa}, \cite{Takaoka},  \cite{Takaoka2}) where among other qualitative properties local and global well-posedness issues were investigated. In particular,  a global sharp well-posedness result was obtained by  Guo and Wu \cite{GW} in $H^{1/2}(\R)$ for initial data satisfying 
\begin{equation}
\label{12}
\|u_0\|_2^2\le {4\pi},
\end{equation}
see also \cite{W}.  The condition \eqref{12} guarantees the energy $E(\cdot)$ is positive via sharp Gagliardo-Nirenberg inequality.

 Recently, via inverse scattering method Jenkins, Liu, Perry and Sulem \cite{JLPS} established global existence of solutions without any restriction on the size of the data in an appropriate weighted Sobolev space.  For results concerning the initial-periodic-boundary value problem (IPBVP)  we refer to \cite{Grunrock}, and \cite{Herr}.

Like the DNLS equation the gDNLS admits a two-parameter family of solitary wave solutions given explicitly by 
\begin{equation*}
\psi_{\omega,c}(x,t)=\varphi_{\omega,c}(x-ct) \exp i\Big\{ wt+\frac{c}{2}(x-ct)-\frac{1}{\a+2} \int\limits_{-\infty}^{x-ct} \varphi_{\omega,c}^{\a}(y)\,dy\Big\}, \hskip10pt \a>0,
\end{equation*}
where
\begin{equation*}
\varphi_{w,c}(x)=
\begin{cases}
\Big\{\dfrac{(2+\a)(4\omega-c^2)}{4\sqrt{\omega}\cosh(\frac{\a}{2}\sqrt{4\omega-c^2}x\big)-2c}\Big\}^{\frac{1}{\a}},  \hskip20pt \text{if} \hskip10pt \omega>\frac{c^2}{4},\\
\\
(\a+2)^{\frac{1}{\a}} \,c^{\frac{1}{\a}}\,(\frac{\a^2}{4}(cx)^2+1)^{-\frac{1}{\a}}, \hskip7pt \text{if} \hskip10pt \omega=\frac{c^2}{4} \hskip10pt \text{and} \hskip10pt  c>0.
\end{cases}
\end{equation*}
In \cite{Sulem}  Liu, Simpson and Sulem obtained stability  and instability results for these solitary wave solutions (see also \cite{TX} and references therein).
Concerning stability matters, for the case $\a=2$   the set of available results is significantly more complete (see \cite{Guo}, \cite{Colin}, \cite{KW}, \cite{MTX}, \cite{LW} and references
therein).
 
When $\a>0$, $\a\ne2$, very little is known regarding well-possedness results for the IVP \eqref{gdnls}.  In \cite{Hao} Hao proved 
 local well-posedness  in $H^s(\R)$, for $\a>5$ and $s>\frac12$. Ambrose and Simpson \cite{Ambrose} for the IPBVP established existence and uniqueness of solutions $u\in L^{\infty}([0,T]; H^1(\mathbb T))$. In \cite{Gleison} Santos showed
the existence and uniqueness of solution $u\in L^{\infty}((0,T) ; H^{3/2}(\R)\cap \ji x\jd^{-1}H^{1/2}(\R))$ for
sufficient small initial data in the case  $1<\a<2$, and local well-posedness in $H^{1/2}(\R)$ for small data when $\a>2$. In \cite{ MHO}  Hayashi and Ozawa
considered the gDNLS equation  in a bounded interval with a Dirichlet condition and established local results in $H^2$ for $\a\ge1$ and $H^1$ for $\a\ge 2$. In \cite{FHI}   Fukaya, Hayashi and Inui showed global result for initial data in $H^1(\R)$, for any $\a\ge 2$, with initial data satisfying $\|u_0\|_2^2\le {4\pi}$.
Their approach is based in a variational method. No result for the case $\alpha\in (0,1]$ has been available. 

Our goal in this work is to give a positive answer to the local well-posedness for the IVP \eqref{gdnls} in a class of initial data.  To present our result we first describe our motivation and the ideas behind the proofs.

In \cite{Cazenave}  Cazenave and Naumkin studied the IVP associated to semi-linear Schr\"odinger equation,
\begin{equation}\label{nls}
\begin{cases}
\p_tu=i(\Delta u\pm|u|^{\a} u), \hskip15pt x\in\R^N,\;t\in\R, \hskip5pt \a>0,\\
u(x,0)=u_0(x).
\end{cases}
\end{equation}
with initial data in $u_0\in H^s(\R^N)$.  For every $\a>0$ they constructed a class of initial data for which there exist  unique local solutions for the IVP \eqref{nls}.  Also, they obtained 
a class of initial data for $\alpha>\frac{2}{N}$ for which there exist global solutions that scatter.

One of the ingredients in the proofs
of their results, is the fact that solutions of the linear problem satisfy
\begin{equation}\label{lower}
\underset{x\in\R^N}{\rm Inf}\; \ji x\jd^m\,| e^{it\Delta} u_0(x)| >0,
\end{equation}
for $t\in[0,T]$ with $T$ sufficiently small whenever the initial data satisfy
\begin{equation}\label{lower-2}
\underset{x\in\R^N}{\rm Inf}\; \ji x\jd^m\,|u_0(x)|\ge \lambda>0.
\end{equation}
This is reached for $m=m(\a)$ and $u_0\in H^s(\R^N)$ with $s$ sufficiently large. To prove the inequality \eqref{lower} the authors in \cite{Cazenave}  
rely on  Taylor's power expansion to avoid applying the Sobolev embedding
since the nonlinear $|u|^{\alpha} u$ is not regular enough and it would restrict the argument to dimensions $N\ge 4$. 

In the case on study in this paper, besides the term $|u|^{\alpha}$ being not regular enough  (it is only  $C^{\alpha}$ for $\;0<\alpha<1$), the presence of the derivative makes more difficult the analysis of the nonlinear term of the equation in \eqref{gdnls}.  

Inspired in the results in \cite{Cazenave} and using the smoothing effects of Kato type for the linear Schr\"odinger equation (homogeneous and inhomogeneous versions found
in \cite{KPV1}) we 
establish a local well-posedness result for the IVP \eqref{gdnls} for small data in a Sobolev weighted space with data
satisfying \eqref{lower-2}.  We achieve this via a contraction mapping principle applied to the integral equation equivalent form for the IVP \eqref{gdnls},
\begin{equation}\label{lower-3}
u(t)=e^{it\p_x^2}u_0+\mu\, \int\limits_0^t e^{i(t-t')\p_x^2} (|u|^{\a}\p_xu)(t')\,dt'.
\end{equation}
and 
\begin{equation}\label{lower-3b}
u(t)=e^{it\p_x^2}u_0 +\mu\, \int\limits_0^t e^{i(t-t')\p_x^2} \p_x(|u|^{\a}u)(t')\,dt'.
\end{equation}
for the IVP \eqref{gdnls-b}.

More precisely,  our main result reads as follows.



\begin{theorem}\label{main}
Given $\a\in (0, 1)$ and $k\ge   m+3$, \hskip3pt $k\in\Z^{+}$, with $\,m\equiv\Big[\frac{2}{\alpha}+1\Big]\, $ 
{\rm(}$\,[x]$  the greatest integer less than or equal to $\,x${\rm)}, 
there exists  $\delta=\delta(\alpha;k)>0$ such that if
\begin{equation}
\label{sob}
u_0\in H^s(\R), \hskip15pt s=k+\frac12,
\end{equation}
and
\begin{equation}
\label{newspace}
\ji x\jd^m u_0\in  L^{\infty}(\R),\hskip15pt  \ji x\jd^m \partial_x^{j+1}u_0\in  L^{2}(\R),\;\;\;\;\;\;\;j=0,1,2,
\end{equation}

with

\begin{equation}\label{main-1}
\|u_0\|_{s,2}+\| \ji x\jd^m u_0\|_{\infty}+\sum_{j=1}^3\| \ji x\jd^m \partial_x^ju_0\|_2 < \delta
\end{equation}
and
\begin{equation}\label{main-2}
\underset{x}{\rm Inf}\; \ji x\jd^m\,|u_0(x)|\ge \lambda>0.
\end{equation}
Then the IVP  \eqref{gdnls} has a  unique solution $u$ such that
\begin{equation}\label{main-3}
u\in C([0,T] : H^s(\R)),
\end{equation}
\begin{equation}\label{main-3a}
\ji x\jd^m u\in C([0,T] : C(\R)\cap L^{\infty}(\R)),
\end{equation}
\begin{equation}\label{main-3b}
\ji x\jd^m\partial_x^{j+1}u\in C([0,T] :L^2(\R)),\;\;\;\;\;\;\;\;j=0,1,2,
\end{equation}
and
\begin{equation}\label{main-4}
\p_x^{k+1}u\in L^{\infty}(\R : L^2([0,T]))
\end{equation}
with $T=T(\alpha; k; \delta; \lambda)>0$. Moreover, the map data-solution
\begin{equation*}
u_0\mapsto u(\cdot,t)
\end{equation*}
from a neighborhood of the datum $u_0$ in  $H^s(\R)$  intersected with the set in \eqref{newspace}  satisfying \eqref{main-1}-\eqref{main-2} into the class defined by 
\eqref{main-3}-\eqref{main-4}
is smooth.
\end{theorem}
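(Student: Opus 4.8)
The plan is to set up a contraction mapping argument for the integral equation \eqref{lower-3} in a closed ball of a Banach space that simultaneously controls three ingredients: (i) the Sobolev regularity $H^s$ with $s=k+\tfrac12$; (ii) the weighted decay quantities $\|\ji x\jd^m\partial_x^{j+1}u\|_{L^\infty_TL^2_x}$ for $j=0,1,2$ together with $\|\ji x\jd^m u\|_{L^\infty_TL^\infty_x}$; and (iii) the Kato smoothing norm $\|\partial_x^{k+1}u\|_{L^\infty_xL^2_T}$. The smallness condition \eqref{main-1} is what lets the ball be chosen small, and the lower bound \eqref{main-2} will be propagated on a short time interval so that $|u|^\alpha$ never degenerates. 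I would begin by recording the linear estimates I will use: the homogeneous and inhomogeneous Kato smoothing estimates from \cite{KPV1} (the gain of one derivative from the Duhamel term measured in $L^\infty_xL^2_T$, and the dual estimate placing $L^1_xL^2_T$-type source terms back into $C([0,T]:L^2)$), the unitarity of $e^{it\p_x^2}$ on $H^s$, and the maximal-function / Strichartz-type bound needed to control $\|u\|_{L^\infty_x}$ or low-order weighted norms. These combine to show the linear flow maps the data space into the solution class; the content is that the Duhamel term does too.

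\textbf{Propagation of the pointwise lower bound.} The decisive preliminary step — and the one most directly borrowed in spirit from \cite{Cazenave} — is to show that if $u_0$ satisfies \eqref{main-2} then $\underset{x}{\inf}\,\ji x\jd^m|e^{it\p_x^2}u_0(x)|\ge \lambda/2$ for $t\in[0,T']$ with $T'$ small, and then that the full solution inherits $\ji x\jd^m|u(x,t)|\ge \lambda/4$ on a possibly smaller interval. For the linear part one writes $e^{it\p_x^2}u_0(x)-u_0(x)=\int_0^t i\p_x^2 e^{it'\p_x^2}u_0\,dt'$ and estimates $\ji x\jd^m$ times this difference in $L^\infty_x$ using the weighted $L^2$ bounds on $\p_x^{j+1}u_0$, $j=0,1,2$ (hence the appearance of exactly three derivatives in \eqref{newspace} and \eqref{main-1}), controlling $\|\ji x\jd^m \p_x^2 e^{it'\p_x^2}u_0\|_{L^1_{t'}L^\infty_x}$ by commuting the weight past the propagator — the commutator $[\ji x\jd^m, e^{it'\p_x^2}]$ produces lower-order terms with at most one extra $t'$-factor and one fewer derivative, which is why $m=[\tfrac2\alpha+1]$ forces us to keep $m$ derivatives of the weight under control. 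For the nonlinear part the same Duhamel difference is estimated, now needing $\|\ji x\jd^m(|u|^\alpha\p_x u)\|_{L^1_TL^\infty_x}$ small, which follows from the a priori ball bounds once $T$ is small.

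\textbf{The nonlinear estimates and the closing of the contraction.} The heart of the matter is bounding $\Phi(u)(t)=e^{it\p_x^2}u_0+\mu\int_0^t e^{i(t-t')\p_x^2}(|u|^\alpha\p_x u)(t')\,dt'$ in each of the three norm components. For the $H^s$-component one uses the inhomogeneous Kato estimate to absorb the $\p_x u$ (one derivative gained), so that effectively one must estimate $\|\ji x\jd^{-?}(|u|^\alpha\p_x^{k+1}u)\|_{L^1_xL^2_T}$-type quantities together with commutator terms from distributing $\p_x^k$ onto $|u|^\alpha$; here the non-smoothness of $|u|^\alpha$ (only $C^\alpha$) is handled exactly as in \cite{Cazenave}, by never differentiating $|u|^\alpha$ more than the available Hölder regularity permits and instead exploiting the pointwise lower bound \eqref{main-2} to write $\p_x(|u|^\alpha)=\alpha|u|^{\alpha-2}\Re(\bar u\,\p_x u)$ with $|u|^{\alpha-2}$ bounded below, reducing fractional differentiation to ordinary products of controlled factors. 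The weighted-norm components are propagated by multiplying Duhamel by $\ji x\jd^m$, commuting the weight past $e^{i(t-t')\p_x^2}$ (again trading a power of $\ji x\jd$ for a $t'$-factor and a derivative), and using the smoothing norm to absorb the derivative on $u$; the $L^\infty_x$ bound on $\ji x\jd^m u$ follows from the $L^2$ bounds on $\ji x\jd^m\p_x^{j}u$ by a one-dimensional Gagliardo–Nirenberg/Sobolev step. Choosing $\delta$ small and $T=T(\alpha;k;\delta;\lambda)$ small makes $\Phi$ a contraction on the ball; uniqueness, persistence \eqref{main-3}–\eqref{main-4}, and smooth dependence on the data follow from the contraction (the latter by differentiating the fixed-point equation, the right-hand side being smooth in $u$ precisely because the lower bound \eqref{main-2} keeps $|u|^\alpha$ away from its singularity). \textbf{The main obstacle} I anticipate is bookkeeping the interplay between the number of derivatives the weight can carry, the Hölder regularity of $|u|^\alpha$, and the one derivative recovered from Kato smoothing — i.e. verifying that $k\ge m+3$ with $m=[\tfrac2\alpha+1]$ is exactly enough for every commutator and product term to close without losing more derivatives than the smoothing effect supplies, while keeping all time-integrals convergent so that the $T\to0$ smallness is genuinely available.
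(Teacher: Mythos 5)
Your proposal follows essentially the same route as the paper: a contraction for the Duhamel operator in a ball defined by the $H^s$ norm, the weighted quantities $\|\ji x\jd^m u\|_{L^\infty_TL^\infty_x}$ and $\|\ji x\jd^m\p_x^{j+1}u\|_{L^\infty_TL^2_x}$ ($j=0,1,2$), and the Kato smoothing norm $\|\p_x^{k+1}u\|_{L^\infty_xL^2_T}$, with the lower bound $\ji x\jd^m|u|\ge\lambda/4$ propagated through the Duhamel difference and then used to convert the negative powers $|u|^{\a-j}$ from differentiating $|u|^\a$ into weights. The only cosmetic differences are that the paper realizes your ``commuting the weight past the propagator'' as the exact identity $x^m e^{it\p_x^2}u_0=e^{it\p_x^2}(x+2it\p_x)^m u_0$ rather than a commutator expansion, and that what must be controlled is an upper bound on $|u|^{\a-2}$ (via the lower bound on $\ji x\jd^m|u|$), not a lower bound on it.
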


\vskip.25in
\begin{remark}

In the case $0<\a<2$, $\mu\neq\pm 1$, the weighted condition on the data can be explained by the so called Mizohata condition (simplified version) \cite{Miz}.

For the linear IVP
\begin{equation}\label{mizohata}
\begin{cases}
\p_t u=i\Delta u+\vec{b}(x)\cdot \nabla u,\hskip10pt x\in\R^n, \;t\in\R,\\
u(x,0)=u_0(x),
\end{cases}
\end{equation}
where $\vec{b}= (b_1, \dots, b_n)$ with $b_j:\R \to \C$, $j=1,\dots,n$ smooth functions, the hypothesis 
\begin{equation}\label{MC}
\underset{\hat{\xi}\in \mathbb{S}^{n-1}}{\rm Sup}\; \underset{l\in \R}{\underset{x\in\R^n}{\rm Sup}}\;\Big|\int\limits_0^l {\rm Im} \, b_j(x+r\hat{\xi}\,)\,\hat{\xi}_j\,dr\Big| <\infty
\end{equation}
is a necessary condition for the $L^2$-well-posedness of \eqref{mizohata}.
\end{remark}

\begin{remark}

(a) The choice $\,m=[2/\a+1]\,$ can be replaced by $m>1/\a\geq 1$. Notice that this hypothesis is consistent with Mizohata's condition \eqref{MC}. However, by fixing  $m=[2/\a+1]$ one greatly shortens  some technical detail in the proof. \\
Here the constants $\delta=\delta(s;\a)>0$ and $m=m(\a)$, and clearly $\lambda\ll \delta$. Typically the function 
$\varphi(x)=\dfrac{c_0}{\ji x\jd^m}$ for appropriate $c_0$ satisfies the hypotheses.
\vskip.15in
(b)  The condition $k>m$ is natural since the operator $\Gamma= x+ 2it\p_x$ commutes with $\p_t-i\p_x^2$. 
\vskip.15in

(c) The hypothesis on $s$, $s-1/2=k\in\Z^{+}$ is not essential, but highly simplify the exposition around the use of the smoothing effect in \cite{KPV1}, which in the inhomogenous case roughly speaking provides a gain of one derivative, see Lemma \ref{lemma2.1}, estimate \eqref{gdnls-4}.

\vskip.15in

(d) From the assumptions \eqref{newspace} and \eqref{main-2} one has that
$$
\ji x\jd^{m-1}u_0\in  L^2(\R),\;\;\;\;\;\;\;\ji x\jd^{m}u_0\notin  L^2(\R)
$$
and by Sobolev embedding
$$
\ji x\jd^{m}\partial_xu_0,\;\;\ji x\jd^{m}\partial_x^2u_0\in  L^{\infty}(\R).
$$

Also, fixing $ k=m+3$ from the interpolation in Section 2 it follows from \eqref{main-1} that
$$
\| \ji x\jd^{m-r}\partial_x^{1+l+r}u_0\|_2 ,\;\;\;\;l=0,1,\;\;\;\;\;r=0,..,m,
$$
can be interpolated between
$$
\| \ji x\jd^{m}\partial^{1+l}_xu_0\|_2\,\;\;\;\;\text{and}\;\;\;\;\;\|\partial_x^{1+l}u_0\|_{m,2}\;\;\;\;\;\;l=0,1.
$$
\end{remark}
\vskip.15in

Concerning the IVP \eqref{gdnls-b} we obtain the following:

\begin{theorem}\label{thm2}  Under the same hypotheses, the conclusions of Theorem \ref{main} extend to solutions of the IVP \eqref{gdnls-b}.
\end{theorem}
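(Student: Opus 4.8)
The plan is to deduce Theorem \ref{thm2} from Theorem \ref{main} by a purely algebraic reduction, so that essentially no new analysis is required. Wherever $u\neq 0$ --- which holds along the solution thanks to the propagated lower bound \eqref{main-3a}, itself a consequence of \eqref{main-2} --- one has, writing $|u|^{\a}=(u\bar u)^{\a/2}$ and differentiating,
\begin{equation*}
\p_x\big(|u|^{\a}u\big)=\Big(1+\frac{\a}{2}\Big)\,|u|^{\a}\,\p_x u+\frac{\a}{2}\,|u|^{\a-2}u^2\,\p_x\bar u .
\end{equation*}
Hence the integral equation \eqref{lower-3b} becomes the integral equation \eqref{lower-3} treated in Theorem \ref{main}, with $\mu$ replaced by $\mu(1+\a/2)$ (still of modulus $\le 2$), plus one additional Duhamel term $\frac{\a\mu}{2}\int_0^t e^{i(t-t')\p_x^2}\big(|u|^{\a-2}u^2\,\p_x\bar u\big)(t')\,dt'$.

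First I would set up exactly the same contraction mapping argument as for Theorem \ref{main}, on the same closed ball of the same function space, with the fixed-point operator now carrying this extra term; it then suffices to check that $G:=|u|^{\a-2}u^2\,\p_x\bar u$ obeys the very same bounds as the main nonlinearity $F:=|u|^{\a}\,\p_x u$ at each step. This should follow from three observations. (i) One has $\big||u|^{\a-2}u^2\big|=|u|^{\a}$ identically, so the weighted $L^{\infty}$ control (and lower bound) of the non-derivative factor of $G$ is word for word the one used for $F$. (ii) Since $\p_x\bar u=\overline{\p_x u}$ and $\overline{e^{is\p_x^2}f}=e^{-is\p_x^2}\bar f$, while the homogeneous and inhomogeneous Kato smoothing estimates of \cite{KPV1} (in particular Lemma \ref{lemma2.1}, estimate \eqref{gdnls-4}) are invariant under complex conjugation and under $s\mapsto -s$, the smoothing bounds applied to $G$ coincide with those applied to $F$. (iii) Differentiating $|u|^{\a-2}u^2$ up to the order $k$ required in \eqref{main-3}--\eqref{main-4} produces, via the chain rule, a finite linear combination of terms consisting of a negative power $|u|^{\a-2\ell}$ times a monomial in $u,\bar u$ and their $x$-derivatives, of exactly the same homogeneity as the terms arising from the corresponding derivatives of $|u|^{\a}$; in modulus these are dominated by the expressions already estimated for $F$, hence controlled by the same interpolated weighted norms of Section 2 together with \eqref{main-1}.

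Given these, the fixed-point operator associated with the decomposed form of \eqref{lower-3b} is a contraction on the same ball, for $\delta$ and $T$ of the same form (shrinking $\delta$ by a harmless factor $C=C(\a)$ if needed), and the solution inherits \eqref{main-3}--\eqref{main-4}; uniqueness in the class, continuous dependence on the data, and smoothness of the data-to-solution map follow exactly as in Theorem \ref{main}, the extra term being estimated by the same expressions, which are smooth in $u$ away from $u=0$. I do not expect a genuine obstacle: the only point that needs care is that the non-vanishing of $u$ that legitimizes differentiating $|u|^{\a-2}u^2$ is precisely the non-vanishing already used to differentiate $|u|^{\a}$, so no new smallness or structural hypothesis is introduced, and that carrying the conjugate derivative $\p_x\bar u$ through the Kato smoothing estimates is harmless. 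Thus the entire bookkeeping of the extra term mirrors that of the main nonlinearity, and Theorem \ref{thm2} follows.
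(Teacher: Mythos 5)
Your reduction is essentially the argument the paper intends: the paper gives no separate proof of Theorem \ref{thm2}, only asserting that the contraction scheme applied to \eqref{lower-3b} goes through, and your expansion $\p_x(|u|^{\a}u)=(1+\tfrac{\a}{2})|u|^{\a}\p_xu+\tfrac{\a}{2}|u|^{\a-2}u^2\,\p_x\bar u$, together with the observations that $\bigl||u|^{\a-2}u^2\bigr|=|u|^{\a}$ and $|\p_x\bar u|=|\p_xu|$ (the smoothing estimate \eqref{gdnls-4} sees only $\|F\|_{L^1_xL^2_T}$), makes all the weighted and smoothing estimates of Section 3 apply verbatim to the extra term. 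The only point deserving one extra line is the contraction step, where the difference $|u|^{\a-2}u^2-|v|^{\a-2}v^2$ is not a function of the moduli alone, so \eqref{gdnls-43} must be replaced by the analogous bound $\lesssim \min(|u|,|v|)^{\a-1}|u-v|$, which indeed holds (splitting into the cases $|u-v|\gtrsim\min(|u|,|v|)$ and the segment staying away from the origin) thanks to the lower bounds \eqref{gdnls-46}.
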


This paper is organized as follows, in Section 2, we list some estimates useful in the proof of Theorem \ref{main}. Section 3 contains the proof of our main result. 

Before leaving this section we will introduce the notation used in this manuscript.

\subsection{Notation} We denote $\ji x\jd= (1+x^2)^{1/2}$. The Fourier transform of a function $f$, and its inverse Fourier transform are denoted by 
$\hat{f}$ and $\check{f}$ respectively. For $s\in \R$, $ J^s=(1-\p_x^2)^{s/2}$, and $D^s=(-\p_x^2)^{s/2}$ stand for the Riesz and Bessel potentials of
order $-s$, respectively. The functional space $H^s(\R)=(1-\p_x^2)^{-s/2}(L^2(\R))$ denotes the Sobolev spaces of order $s$ endowed with the norm $\|f\|_{s,2}=\|J^s f\|_2$.
$H^l(\R:\,d\nu),\;l\in\Z^{+},$ denotes the weighted Sobolev space of functions $f :\R\to \R$ such that
$$
\|f\|^2_{H^l(\R:\,d\nu)}=\sum_{j=0}^l\,\int_{\R}((\partial_x^jf)(x))^2\,d\nu(x)<\infty.
$$

 For two variable functions $f=f(x,t)$ with $(x,t)\in \R\times [0,T]$, we consider the mixed spaces $L^q([0,T]; L^p(\R))$ and $L^p(\R; L^q([0,T]))$  
corresponding to the norms
\begin{equation*}
\|f\|_{L^q_TL^p_x}=\Big(\int\limits_0^T\Big(\int\limits_{\R} |f(x,t)|\,dx\Big)^{q/p}\,dt\Big)^{1/q}
\end{equation*}
and
\begin{equation*}
\|f\|_{L^p_xL^q_T}=\Big(\int\limits_{\R}\Big(\int\limits_{0}^T |f(x,t)|\,dt\Big)^{p/q}\,dx\Big)^{1/p}.
\end{equation*}

\section{preliminary estimates}

We shall use the following estimates concerning the inhomogeneous linear problem associated to the Schr\"odinger equation: 

\begin{lemma}
\label{lemma2.1} \cite{KPV1}  If $v$ is solution of 
\begin{equation}\label{gdnls-3}
\begin{cases}
\p_t v=i\p_x^2v + F(x,t), \hskip15pt x\in\R, \hskip5pt t\in [0,T],\\
v(x,0)=v_0(x),
\end{cases}
\end{equation}
then
\begin{equation}\label{gdnls-4}
\|D^{1/2}_x v(t)\|_{L^{\infty}_TL^2_x}+\|\p_xv(t)\|_{L^{\infty}_xL^2_T}\lesssim \|D^{1/2}_xv_0\|_2+\|F\|_{L^1_xL^2_T}.
\end{equation}
\end{lemma}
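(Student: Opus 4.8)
The plan is to work from the Duhamel representation of the solution of \eqref{gdnls-3},
\begin{equation*}
v(t)=e^{it\p_x^2}v_0+\int_0^t e^{i(t-t')\p_x^2}F(t')\,dt',
\end{equation*}
abbreviate $U(t):=e^{it\p_x^2}$, and estimate the free-group term and the Duhamel term separately against each of the two norms on the left of \eqref{gdnls-4}. Everything reduces to one genuinely analytic input — the sharp homogeneous local smoothing estimate for $U(t)$ — together with its dual and the unitarity of $U(t)$ on $L^2$; the only delicate point is the inhomogeneous (Duhamel) smoothing bound, which I would treat by a direct computation with the explicit retarded kernel.

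For the free-group term I first note that, $U(t)$ being a Fourier multiplier of modulus one, it is unitary on $L^2$ and commutes with $D_x^{1/2}$, so $\|D_x^{1/2}U(t)v_0\|_2=\|D_x^{1/2}v_0\|_2$ for every $t$, which controls the first norm. For the second norm I would prove the sharp local smoothing
\begin{equation}\label{HS}
\|\p_xU(t)v_0\|_{L^\infty_xL^2_T}\le\|\p_xU(t)v_0\|_{L^\infty_xL^2(\R_t)}\lesssim\|D_x^{1/2}v_0\|_2
\end{equation}
by the standard computation: for fixed $x$ one writes $\p_xU(t)v_0(x)=\int i\xi\,e^{ix\xi}e^{-it\xi^2}\h v_0(\xi)\,d\xi$, splits into $\xi>0$ and $\xi<0$, substitutes $\eta=\xi^2$ on each half so that each piece becomes an inverse Fourier transform in $t$, and applies Plancherel in $t$ to get $\int_\R|\p_xU(t)v_0(x)|^2\,dt\lesssim\int|\xi|\,|\h v_0(\xi)|^2\,d\xi$ uniformly in $x$. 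With the multiplier $|\xi|^{1/2}$ in place of $i\xi$ the same argument gives $\|D_x^{1/2}U(t)f\|_{L^\infty_xL^2_t}\lesssim\|f\|_2$, and its adjoint, restricted to the predual $L^1_xL^2_t$, is the dual smoothing estimate
\begin{equation}\label{HSdual}
\Big\|\int_\R U(-t')D_x^{1/2}G(\cdot,t')\,dt'\Big\|_2\lesssim\|G\|_{L^1_xL^2_T}.
\end{equation}

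For the Duhamel term $w(t)=\int_0^t U(t-t')F(t')\,dt'$, its contribution to the first norm is handled by pulling $U(t)$ out by unitarity, $\|D_x^{1/2}w(t)\|_2=\|\int_0^tU(-t')D_x^{1/2}F(t')\,dt'\|_2$, and applying \eqref{HSdual} to $F\,\mathbf{1}_{[0,t]}$, whose $L^1_xL^2$ norm is dominated by $\|F\|_{L^1_xL^2_T}$; taking the supremum in $t$ is harmless. The remaining and hardest piece is the Duhamel smoothing bound $\|\p_xw\|_{L^\infty_xL^2_T}\lesssim\|F\|_{L^1_xL^2_T}$. Here I would use the explicit propagator kernel: writing $\p_xw(x,t)=\int_\R\big[\int_0^tK(x-y,t-t')F(y,t')\,dt'\big]\,dy$ with $K(z,\tau)=c\,z\,\tau^{-3/2}e^{iz^2/(4\tau)}\mathbf{1}_{\tau>0}$, Minkowski's inequality pulls the $dy$-integral outside the $L^2_t$ norm, and since $K$ is causal the inner integral is exactly the convolution in $t$ by $K(z,\cdot)$. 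Its $L^2_t\to L^2_t$ norm equals $\sup_\omega|\widehat{K(z,\cdot)}(\omega)|$, and the rescaling $\tau=z^2\sigma$ shows this symbol equals $\Phi(\omega z^2)$ with $\Phi(\lambda)=c\int_0^\infty\sigma^{-3/2}e^{i(1/(4\sigma)-\lambda\sigma)}\,d\sigma$. Once the uniform bound $\sup_\lambda|\Phi(\lambda)|<\infty$ is in hand the convolution is bounded on $L^2_t$ uniformly in $z$, whence $\|\p_xw(x,\cdot)\|_{L^2_T}\lesssim\int_\R\|F(y,\cdot)\|_{L^2_T}\,dy=\|F\|_{L^1_xL^2_T}$ uniformly in $x$.

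The main obstacle is precisely this uniform oscillatory-integral estimate for $\Phi$, which I would establish by stationary phase (for $\lambda<0$ the stationary point $\sigma_*\sim|\lambda|^{-1/2}$ contributes $\sigma_*^{-3/2}|\phi''(\sigma_*)|^{-1/2}=O(1)$) and integration by parts for $\lambda\ge0$. This is the step that encodes the gain of a full derivative — rather than the half derivative of the homogeneous case — and it is also the reason one cannot simply transfer the non-retarded estimate via the Christ--Kiselev lemma: the time-integrabilities on the two sides coincide ($L^2_T$ both in input and output), so one genuinely needs the causality of $K$ together with the stationary-phase bound. Collecting the four estimates and summing yields \eqref{gdnls-4}.
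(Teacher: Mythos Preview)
The paper does not prove this lemma; its entire proof is the sentence ``For the proof of \eqref{gdnls-4} we refer to \cite{KPV1}.'' Your sketch is correct and is essentially the Kenig--Ponce--Vega argument: homogeneous smoothing via the substitution $\eta=\xi^2$ and Plancherel in $t$, its dual for the $L^\infty_TL^2_x$ control of the Duhamel piece, and the retarded gain of a full derivative through the explicit causal kernel together with the uniform bound $\sup_{z,\omega}|\widehat{K(z,\cdot)}(\omega)|<\infty$. Your observation that the Christ--Kiselev lemma is unavailable here because the time exponents coincide ($L^2_T$ on both sides) is exactly the reason the direct kernel computation is needed, and your stationary-phase reading of $\Phi$ is the standard way to close it. One cosmetic slip: the rescaling $\tau=z^2\sigma$ actually produces $\sgn(z)\,\Phi(\omega z^2)$ rather than $\Phi(\omega z^2)$, but this is irrelevant for the supremum. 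If you want a cleaner alternative to the oscillatory-integral bound on $\Phi$, note that $\widehat{K(z,\cdot)}(\omega)$ is just $\partial_z$ applied to the boundary value of the free resolvent kernel, which in one dimension is $\tfrac{i}{2\sqrt{\omega}}e^{i\sqrt{\omega}\,|z|}$ (appropriate branch); its $z$-derivative has modulus $\tfrac12$ uniformly.
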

\begin{proof} For the proof of \eqref{gdnls-4}  we refer to \cite{KPV1}. 
\end{proof}

The result in Lemma \ref{lemma2.1}  is the optimal one-dimensional (homogeneous and inhomogeneous) version of the smoothing effect in solutions of the Schr\"odinger equation. This type of effect, in his homogenous version, was first established by T. Kato \cite{Ka} in solutions of the Korteweg-de Vries equation 
(see also \cite{KF}). In \cite{CS}, \cite{Sj} and \cite{Ve} the corresponding homogeneous $n$-dimensional result was deduced (for further comments and discussion see \cite{LP}).

To end this section,  we state the following useful interpolation results. 
\begin{lemma}\label{interp} For any $a, b>0$ and $\gamma\in (0,1)$ 
\begin{equation}
\begin{aligned}
&\|J^{\gamma a}(\ji x\jd^{(1-\gamma)b}f)\|_2 \le  c\,\|\ji x\jd^{b} f\|_2^{1-\gamma}\|J^a f\|^{\gamma}_2, \hskip10pt \gamma\in(0,1),\\
\text{and}&\\
&\|\ji x\jd^{\gamma a}(J^{(1-\gamma)b}f)\|_2 \le  c\,\|J^{b} f\|_2^{1-\gamma}\|\ji x\jd^a f\|^{\gamma}_2, \hskip10pt \gamma\in(0,1).
\end{aligned}
\end{equation} 
\end{lemma}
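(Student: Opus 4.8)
\emph{Proof plan.} I would begin by noting that the two inequalities are Plancherel-dual, so only one of them needs to be proved. Indeed, the Fourier transform turns $J^{s}$ into multiplication by $\ji\xi\jd^{s}$ and turns multiplication by $\ji x\jd^{t}$ into the Bessel potential $(1-\partial_{\xi}^{2})^{t/2}$ in the $\xi$-variable; hence, writing $g=\widehat f$ and applying Plancherel, the first inequality for $f$ becomes exactly the second for $g$ (with the same $a,b,\gamma$), and conversely. I will therefore establish the first inequality by a Stein-type complex-interpolation (three-lines) argument.

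Fix $\theta=\gamma\in(0,1)$ and put $\mu_{0}=\|\ji x\jd^{b}f\|_{2}$, $\mu_{1}=\|J^{a}f\|_{2}$; one may assume both are finite (otherwise there is nothing to prove) and, after a standard mollification--truncation plus Fatou argument, that $f\in\mathcal S(\R)$. By $L^{2}$-duality it suffices to bound $|\langle J^{\theta a}\ji x\jd^{(1-\theta)b}f,\phi\rangle|$ by $C\mu_{0}^{1-\theta}\mu_{1}^{\theta}$ uniformly over $\phi\in\mathcal S(\R)$ with $\|\phi\|_{2}\le1$, where $\langle u,v\rangle=\int uv\,dx$ is the bilinear pairing, for which both $J^{s}$ and $\ji x\jd^{t}$ are symmetric. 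I would then introduce, on the closed strip $\{0\le\Re z\le1\}$,
\[
F(z)=e^{z^{2}}\,\big\langle f,\ \ji x\jd^{(1-z)b}\,J^{za}\phi\big\rangle ,
\]
which is analytic in the open strip and continuous and bounded on its closure (the Gaussian factor dominating the at-most-polynomial growth in $|\Im z|$ of $J^{za}\phi$), with $F(\theta)=e^{\theta^{2}}\langle J^{\theta a}\ji x\jd^{(1-\theta)b}f,\phi\rangle$. It then remains to estimate $F$ on the two edges.

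On $\Re z=0$ ($z=iy$): $|e^{z^{2}}|=e^{-y^{2}}\le1$, the multiplier $J^{iya}$ is unitary on $L^{2}$ (symbol of modulus $1$), and $|\ji x\jd^{-iyb}|\equiv1$; moving the symmetric operators onto $f$ gives $|F(iy)|\le\|\ji x\jd^{b}f\|_{2}=\mu_{0}$. On $\Re z=1$ ($z=1+iy$): writing $J^{(1+iy)a}=J^{iya}J^{a}$ and arguing similarly, one is left with $|F(1+iy)|\le e\,e^{-y^{2}}\,\|J^{a}(\ji x\jd^{-iyb}f)\|_{2}$. This is the one genuinely delicate point: since $J^{a}$ does not commute with the weight, the factor $\ji x\jd^{-iyb}$ cannot be pulled outside $J^{a}$ for free, and one must instead use that multiplication by the smooth, modulus-one function $m_{y}(x)=\ji x\jd^{-iyb}=\exp\!\big(-\tfrac{iyb}{2}\log(1+x^{2})\big)$ is bounded on $H^{a}$. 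Since $|\partial_{x}^{k}m_{y}(x)|\le C_{k}(1+|y|)^{k}$ for every $k$, the standard fact that multiplication by a smooth function with bounded derivatives maps $H^{a}$ to itself with norm controlled by finitely many sup-norms of its derivatives yields $\|J^{a}(\ji x\jd^{-iyb}f)\|_{2}\le C(1+|y|)^{N}\mu_{1}$ for some $N=N(a)$, whence $|F(1+iy)|\le C e^{-y^{2}}(1+|y|)^{N}\mu_{1}\le C'\mu_{1}$ --- the factor $e^{z^{2}}$ having been inserted exactly to absorb this polynomial growth in $y$.

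Finally, since $F$ is bounded on the strip with $|F|\le\mu_{0}$ on $\Re z=0$ and $|F|\le C'\mu_{1}$ on $\Re z=1$, Hadamard's three-lines theorem gives $|F(\theta)|\le\mu_{0}^{1-\theta}(C'\mu_{1})^{\theta}$, i.e.\ $|\langle J^{\theta a}\ji x\jd^{(1-\theta)b}f,\phi\rangle|\le C\,\mu_{0}^{1-\theta}\mu_{1}^{\theta}$; taking the supremum over $\phi$ and removing the Schwartz assumption on $f$ by density proves the first inequality, and the second follows from the Plancherel reduction. The main obstacle is the estimate on the line $\Re z=1$: everything rests on controlling $\|J^{a}(\ji x\jd^{-iyb}f)\|_{2}$ by an admissible ($y$-polynomial) multiple of $\|J^{a}f\|_{2}$, which is precisely where the noncommutativity of the Bessel potential with the weight --- the same feature responsible for the nontriviality of the lemma --- has to be confronted.
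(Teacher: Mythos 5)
Your argument is correct and takes essentially the same route as the paper: the paper offers no proof of its own but cites Nahas--Ponce with the remark that the result rests on the Three Lines Theorem, which is exactly the complex-interpolation argument you carry out (duality reduction via Plancherel, the analytic family $z\mapsto \ji x\jd^{(1-z)b}J^{za}$, and the $H^a$-boundedness of multiplication by $\ji x\jd^{-iyb}$ with polynomially growing norm absorbed by the factor $e^{z^2}$).
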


For the proof which is based on the Three Lines Theorem we refer to \cite{NP}.

By integration by parts, one also has :
\begin{lemma}\label{interp2} For any $j, \,k\in\Z^{+},\,j,\,k\geq 1$ there exists $c=c(k;j)>0$ such that
\begin{equation}
\begin{aligned}
& \| \ji x\jd^k\partial_x^jf\|_2^2\leq c \| \ji x\jd^k\partial_x^{j+1}f\|_2 \| \ji x\jd^k\partial_x^{j-1}f\|_2 +c \| \ji x\jd^{k-1}\partial_x^{j-1}f\|_2^2,\\
& \| \ji x\jd^k\partial_x^jf\|_2^2\leq c \| \ji x\jd^{k-1}\partial_x^{j+1}f\|_2 \| \ji x\jd^{k+1}\partial_x^{j-1}f\|_2 +c \| \ji x\jd^{k-1}\partial_x^{j-1}f\|_2^2,\\
& \| \ji x\jd^k\partial_x^jf\|_2^2\leq c \| \ji x\jd^{k+1}\partial_x^{j+1}f\|_2 \| \ji x\jd^{k-1}\partial_x^{j-1}f\|_2 +c \| \ji x\jd^{k-1}\partial_x^{j-1}f\|_2^2.
\end{aligned}
\end{equation} 
\end{lemma}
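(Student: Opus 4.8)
The plan is to derive all three inequalities from a single integration by parts applied to the quantity $\int_\R \ji x\jd^{2k}(\p_x^j f)^2\,dx$; the three estimates will differ only in how the weight $\ji x\jd^{2k}$ is distributed between the two factors in the Cauchy--Schwarz step. Throughout I write $A=\| \ji x\jd^k\p_x^j f\|_2$ and reduce to Schwartz $f$, the general case following by density once all the norms on the right-hand sides are assumed finite, since these control the decay needed for the boundary terms to vanish.

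Writing one factor as $\p_x^j f=\p_x(\p_x^{j-1}f)$ and integrating by parts (using $j\ge 1$ so that $\p_x^{j-1}f$ is defined), I get
\begin{equation*}
A^2=\int_\R \ji x\jd^{2k}(\p_x^j f)\,\p_x(\p_x^{j-1}f)\,dx=-\int_\R \ji x\jd^{2k}(\p_x^{j+1}f)(\p_x^{j-1}f)\,dx-\int_\R \p_x(\ji x\jd^{2k})\,(\p_x^j f)(\p_x^{j-1}f)\,dx,
\end{equation*}
with no boundary contribution. I call the first integral the main term $I$ and the second the error term $II$.

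For the error term, $\p_x(\ji x\jd^{2k})=2kx\ji x\jd^{2k-2}$ and, since $|x|\le\ji x\jd$ and $k\ge1$, one has $|2kx\ji x\jd^{2k-2}|\le 2k\ji x\jd^{2k-1}=2k\,\ji x\jd^{k}\,\ji x\jd^{k-1}$, so Cauchy--Schwarz yields $|II|\le 2k\,A\,\| \ji x\jd^{k-1}\p_x^{j-1}f\|_2$. By Young's inequality this is $\le\tfrac12 A^2+2k^2\| \ji x\jd^{k-1}\p_x^{j-1}f\|_2^2$; the first piece is absorbed into the left side and the second is precisely the stated remainder term, leaving $\tfrac12 A^2\le|I|+2k^2\| \ji x\jd^{k-1}\p_x^{j-1}f\|_2^2$.

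For the main term $I$ I distribute $\ji x\jd^{2k}$ between the two factors in three ways and apply Cauchy--Schwarz: the symmetric split $\ji x\jd^{2k}=\ji x\jd^{k}\ji x\jd^{k}$ gives $|I|\le\| \ji x\jd^k\p_x^{j+1}f\|_2\| \ji x\jd^k\p_x^{j-1}f\|_2$, hence the first inequality; the split $\ji x\jd^{2k}=\ji x\jd^{k-1}\ji x\jd^{k+1}$ gives the second; and $\ji x\jd^{2k}=\ji x\jd^{k+1}\ji x\jd^{k-1}$ gives the third. Multiplying through by $2$ produces the constant $c=c(k;j)$. The computation is elementary; the only points genuinely requiring care are the vanishing of the boundary terms (handled by the density reduction to Schwartz functions) and the bookkeeping in absorbing $II$, which forces the factor $\tfrac12 A^2$ above and is the reason the argument closes rather than circling back on $A$.
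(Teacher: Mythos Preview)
Your proof is correct and is precisely the integration-by-parts argument the paper intends: the paper does not spell out a proof but simply prefaces the lemma with ``By integration by parts, one also has,'' and your computation is the natural way to make that sentence rigorous. The only remark is that in this paper $f$ is complex-valued, so one should either work with $|\partial_x^j f|^2=(\partial_x^j f)\overline{\partial_x^j f}$ (the same integration by parts goes through, with an extra complex-conjugate bar that does not affect the estimates) or apply your real-variable argument to $\Re f$ and $\Im f$ separately; either way the conclusion is unchanged.
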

\vskip.15in

\vskip.2in

\section{Proof of the Main Result}
Consider the IVP
\begin{equation}\label{gdnls-1}
\begin{cases}
\p_tu=i\p_x^2u+\mu\,|u|^{\a}\p_xu, \hskip15pt x,t\in\R, \hskip5pt 0<\a \le 1,\;\;\;\;|\mu|=1,\\
u(x,0)=u_0(x)
\end{cases}
\end{equation}
and its integral equation version
\begin{equation}\label{gdnls-2}
u(t)=e^{it\p_x^2}u_0+\mu \int\limits_0^t e^{i(t-t')\p_x^2} (|u|^{\a}\p_xu)(t')\,dt'.
\end{equation}

Let us fix  $s=k+\frac12$, with $k$ to be determined below.

We shall establish the existence and uniqueness (in fact local well-posedness of \eqref{gdnls-1}) by using the contraction principle of the
operator
\begin{equation}\label{gdnls-5}
\Phi(u)(t)=e^{it\p_x^2}u_0+\mu\, \int\limits_0^t e^{i(t-t')\p_x^2} (|u|^{\a}\p_xu)(t')\,dt',
\end{equation}
with the data  $u_0$ satisfying :
\begin{equation}\label{gdnls-6}
\|u_0\|_{s,2}+ \|\ji x\jd^m u_0\|_{\infty}+\sum_{j=0}^2\| \ji x\jd^m \partial_x^{j+1}u_0\|_2 < \delta
\end{equation}
 for some $\delta\sim 1$ and for some $m+3\le k=s-1/2$, $k\ge 4$, there exists $\lambda>0$ such that
\begin{equation}\label{gdnls-7}
\underset{x}{\rm Inf}\, \ji x\jd^m |u_0(x)|\ge \lambda.
\end{equation}

It will be shown that if
\begin{equation}\label{gdnls-8}
\begin{split}
X_T&=\Big\{ u\in C([0,T]: H^s(\R)) : \\
&\underset{[0,T]}{\sup}\;\big(\|u(t)\|_{s,2}+\| \ji x\jd^m u(t)\|_{\infty}+\sum_{j=1}^3\| \ji x\jd^m \partial_x^{j}u(t)\|_2\big) \\
\\
&\;\;\;+\|\p_x^{k+1}u\|_{L^{\infty}_xL^2_T}\equiv \tres u \tres_{_T}\le 2c\delta\\
\\
\;\;\;\;\;&\text{and}\\
&\;\;\underset{(x,t)\in\R\times[0,T]}{\rm Inf}\, \ji x\jd^m |u(x,t)|\ge \frac{\lambda}{4}\;\;\Big\},
\end{split}
\end{equation}
then $\Phi$ maps $X_T$ into itself and defines a contraction in the $\,\tres\cdot\tres_{_T}$-norm for $T$ sufficiently small.

We recall that
\begin{equation}\label{gdnls-9}
\|f\|_{s,2} \simeq \|D^s_x f\|_2+\|f\|_2
\end{equation}

Thus, from \eqref{gdnls-4}
\begin{equation}\label{gdnls-10}
\begin{split}
\|D^s\Phi(u)&\|_{L^{\infty}_TL^2_x}+\|\p_x^{k+1}\Phi(u)\|_{L^{\infty}_xL^2_T}\\
& \le c\,\delta
+\underset{j=0}{\overset{k}{\sum}} c_j\,\|\p_x^j(|u|^{\a}) \p_x^{k+1-j}u\|_{L^1_xL^2_T}\\
&\le c\delta +\underset{j=0}{\overset{k}{\sum}} c_j\,A_j.
\end{split}
\end{equation}

We observe that from \eqref{gdnls-8} one has that
\begin{equation}\label{gdnls-11}
\ji x\jd^m\ge \frac{\lambda}{4}\, |u(x,t)|^{-1} \hskip 15pt \forall (x,t)\in \R\times [0,T].
\end{equation}

First consider the term $A_0$ in \eqref{gdnls-10}. Then for $\theta>1$
\begin{equation}\label{gdnls-12}
\begin{split}
A_0 &\equiv c_0\,\| \ji x\jd^{-\theta} \ji x\jd^{\theta} |u|^{\a}\p_x^{k+1}u\|_{L^1_xL^2_T}\\
&\le c\, \|\ji x\jd^{\theta} |u|^{\a}\p_x^{k+1}u\|_{L^{\infty}_xL^2_T}\\
&\le c\, \|\ji x\jd^{\theta} |u|^{\a}\|_{L^{\infty}_xL^{\infty}_T}\|\p_x^{k+1}u\|_{L^{\infty}_xL^2_T}\\
&\le c\, \|\ji x\jd^m |u|\|^{\a}_{L^{\infty}_xL^{\infty}_T}\|\p_x^{k+1}u\|_{L^{\infty}_xL^2_T}
\end{split}
\end{equation}
with $m\geq \theta/\a$.

To simplify the exposition we fix 
$$
\theta=2,\;\;\;\;\;\;\;m=\Big[\frac{2}{\a}+1\Big]\;\;\;\;\;\;\text{and}\;\;\;\;\;\;k=m+3.
$$


Next, we consider $A_1$ in \eqref{gdnls-10}. Thus
\begin{equation}\label{gdnls-14}
\begin{split}
A_1 &\equiv \|\p_x(|u|^{\a})\p_x^ku\|_{L^1_xL^2_T}\\
&\le c\,\|\ji x\jd \p_x(|u|^{\a})\p_x^ku\|_{L^2_xL^2_T}\\
&\le c\,\|\ji x\jd |u|^{\a-1}|\p_xu||\p_x^ku|\|_{L^2_TL^2_x}\\
&\le c\,T^{1/2}\|\ji x\jd \ji x\jd^{m(1-\a)} \p_xu\|_{L^{\infty}_TL^{\infty}_x}\|\p_x^{k}u\|_{L^{\infty}_TL^2_x}\\
&\le c\,T^{1/2}\|\ji x\jd^{m-1}\p_xu\|_{L^{\infty}_TL^{\infty}_x}\|\p_x^{k}u\|_{L^{\infty}_TL^2_x}
\end{split}
\end{equation}
by combining \eqref{gdnls-11} and the fact that $m\alpha\geq2$, with $\,c=c(\lambda;\alpha)$. Above we have used that
\begin{equation}\label{gdnls-15}
\begin{split}
\p_x(|u|^{\a})&=\p_x(u\overline{u})^{\frac{\a}{2}}=\frac{\a}{2} (u\overline{u})^{\frac{\a}{2}-1}(\overline{u}\p_xu+u\p_x\overline{u})\\
&=\frac{\a}{2} |u|^{\a-2}(\overline{u}\p_xu+u\p_x\overline{u}),
\end{split}
\end{equation}
and so
\begin{equation}\label{gdnls-16}
|\p_x |u|^{\a}|\le c\,|u|^{\a-1} |\p_xu|.
\end{equation}

We now turn to the estimate of $A_2$ in \eqref{gdnls-10}

\begin{equation}\label{gdnls-17}
\begin{split}
A_2 &\equiv \|\p_x^2(|u|^{\a}) \p_x^{k-1}u\|_{L^1_xL^2_T}\\
&\le  \|\ji x \jd \p_x^2(|u|^{\a}) \p_x^{k-1}u\|_{L^2_xL^2_T}\\
&\le \|\ji x\jd |u|^{\a-1}|\p_x^2u||\p_x^{k-1}u|\|_{L^2_xL^2_T}+\|\ji x\jd |u|^{\a-2}|\p_xu|^2|\p_x^{k-1}u|\|_{L^2_xL^2_T}\\
&\equiv A_{2,1}+A_{2,2}
\end{split}
\end{equation}
where from \eqref{gdnls-11}
\begin{equation}\label{gdnls-18a}
\begin{split}
A_{2,1} &\le cT^{1/2}\|\ji x\jd^{m(1-\a)+1}|\p_x^2u||\p_x^{k-1}u|\|_{L^{\infty}_TL^2_x}\\
&\le c T^{1/2}\|\ji x\jd^{m-1} \p_x^2 u\|_{L^{\infty}_TL^2_x} \|\p_x^{k-1}u\|_{L^{\infty}_TL^{\infty}_x}
\end{split}
\end{equation}
and 
\begin{equation}\label{gdnls-18}
\begin{split}
A_{2,2}&\leq c T^{1/2}\|\ji x\jd^{m(2-\a)+1}(\p_x u)^2\,\p_x^{k-1}u\|_{L^{\infty}_TL^{2}_x}\\
&\leq c T^{1/2}\|\ji x\jd^{m}\p_x u\|_{L^{\infty}_TL^{\infty}_x}^2 \|\p_x^{k-1}u\|_{L^{\infty}_TL^{2}_x}
\end{split}
\end{equation}

A familiar argument provides the estimates for $A_3, \dots, A_{k-1}$, so we shall consider $A_k$.

\begin{equation}\label{gdnls-19}
\begin{split}
A_k&\!\equiv \!\|\p_x^k(|u|^{\a})\p_xu\|_{L^1_xL^2_T}\\
&\le \!c\|\ji x\jd \p_x^k(|u|^{\a})\p_xu\|_{L^2_TL^2_x}\\
&\le \!c\|\ji x\jd |u|^{\a-k}|\p_xu|^k|\p_xu| \|_{L^2_TL^2_x} +\!\dots\\
&\hskip10pt + c\|\ji x\jd |u|^{\a-1}|\p_x^ku| |\p_xu|\|_{L^2_TL^2_x}\\
&\equiv \!A_{k,k}+\dots+A_{k,1}.
\end{split}
\end{equation}

Notice that $A_{k,1}$ was already estimated in \eqref{gdnls-14}. Roughly,  the middle terms $A_{k, k-1},\dots,A_{k,2}$ can be estimated
by interpolation between the result for $A_{k,k}$ and $A_{k,1}$, so we shall just consider $A_{k,k}$. Using \eqref{gdnls-11} and Sobolev embbeding

\begin{equation}\label{gdnls-20}
\begin{split}
A_{k,k} &\le c\|\ji x\jd \ji x\jd^{(k-\a)m} |\p_xu|^k\,|\p_xu|\|_{L^2_TL^2_x}\\
&\le c T^{1/2} \|\ji x\jd^{km-1} |\p_xu|^{k+1} \|_{L^{\infty}_TL^2_x}\\
&\le c T^{1/2} \| \ji x\jd^{m} \p_xu\|^k_{L^{\infty}_TL^{\infty}_x} \|\p_x u\|_{L^{\infty}_TL^2_x}\\
&\le c T^{1/2}\big(\underset{j=1}{\overset{3}{\sum}} \| \ji x\jd^{m} \p_x^ju\|_{L^{\infty}_TL^2_x}\big)^k \|\p_x u\|_{L^{\infty}_TL^2_x}\
\end{split}
\end{equation}
with $c=c(\lambda;\alpha)>0$.

Next, we bound $\|\Phi u(t)\|_{L^{\infty}_TL^2_x}$ as
\begin{equation}\label{gdnls-21}
\begin{split}
\|\Phi u(t)\|_{L^{\infty}_TL^2_x} &\le \|u_0\|_2 +\int\limits_0^t \| |u|^{\a}\p_xu\|_2(t')\,dt'\\
& \le \|u_0\|_2+ c T \||u|^{\a} \p_xu\|_{L^{\infty}_TL^2_x}\\
& \le \|u_0\|_2 +c T \| u\|_{L^{\infty}_xL^{\infty}_T}^{\alpha} \| \p_x u\|_{L^{\infty}_T L^2_x}.
\end{split}
\end{equation}
The Sobolev embedding yields the result since $k\ge 4$. Next, using the identity
\begin{equation}\label{gdnls-22}
x\,e^{it\p_x^2} u_0= e^{it\p_x^2} \,xu_0 +2i t \,e^{it\p_x^2}\p_x u_0=e^{it\partial_x^2}(x+2it\partial_x)u_0
\end{equation}
one has that
\begin{equation}\label{gdnls-22a}
x^me^{it\p_x^2} u_0=e^{it\partial_x^2}(x+2it\partial_x)^m u_0.
\end{equation}

We shall estimate 
\begin{equation}\label{gdnls-23}
 \sum_{j=1}^3 \| \ji x\jd^{m} \p_x^j\Phi(u)\|_{L^{\infty}_TL^{2}_x}\equiv \sum_{j=1}^3 E_j.
\end{equation}

Combining \eqref{gdnls-22a} and the interpolation inequalities in Section 2 it follows that
\begin{equation}
\label{a1}
\begin{split}
\| \ji x\jd^m\,e^{it\partial_x^2}u_0\|_2 &\leq c \| (1+|x|^m)\,e^{it\partial_x^2}u_0\|_2\\
&\leq c\| e^{it\partial_x^2}u_0\|_2 +c\|e^{it\partial_x^2}(x+2it\partial_x)^mu_0\|_2\\
&\leq c\|u_0\|_2 +c\|(x+2it\partial_x)^m u_0\|_2\\
&\leq c\big(\|\ji x\jd^m u_0\|_2+ \ji t\jd^m (\| u_0\|_2 +\|\partial_x^mu_0\|_2)\big).
\end{split}
\end{equation}




Thus, to estimate $E_1$, $E_2$ and $E_3$ in \eqref{gdnls-23} we write 
\begin{equation}
\label{a4}
\begin{aligned}
\| &\ji x\jd^m \,\partial_x^j e^{it\partial_x^2}u_0\|_2 =\|   \ji x\jd^m\,e^{it\partial_x^2}(\partial_x^ju_0)\|_2  \\
&\leq c\big(\|\,\ji x\jd^m \partial_x^j u_0\|_2+ \ji t\jd^m(\|\,\partial_x^ju_0\|_2 +\|\,\partial_x^{m+j}u_0\|_2)\big), \;j=1, 2, 3.
\end{aligned}
\end{equation}

For $E_3$ inserting \eqref{a4} into \eqref{gdnls-5} leads us to
\begin{equation}
\label{a5}
\begin{aligned}
&\| \ji x\jd^m\,\partial_x^3 \Phi(u)(t)\|_2\\
&\leq c\big(\| \ji x\jd^m\, \partial_x^3u_0\|_2+ \ji t\jd^m(\|\partial_x^3u_0\|_2 +\|\,\partial_x^{m+3}u_0\|_2)\big)\\
&\hskip10pt+c\int_0^t\|\ji x\jd^m \,\partial_x^3(|u|^{\alpha}\partial_xu)\|_2(t')dt'\\
&\hskip10pt+c\int_0^t \ji t-t'\jd^m(\|\,\partial_x^3(|u|^{\alpha}\partial_xu)\|_2+\|\partial_x^{m+3}(|u|^{\alpha}\partial_xu)\|_2)(t')dt'.
\end{aligned}\end{equation}

The new terms in the right hand side of \eqref{a5} are
\begin{equation}
\label{aa5}
E_{3,1}(t)\equiv \int_0^t\|\ji x\jd^m \,\partial_x^3(|u|^{\alpha}\partial_xu)\|_2(t')dt'
\end{equation}
and 
\begin{equation}
\label{aa6}
E_{3,2}(t)\equiv \int_0^t \ji t-t'\jd^m\|\partial_x^{m+3}(|u|^{\alpha}\partial_xu)\|_2(t')dt'.
\end{equation}

To estimate $E_{3,1}(t)$ we observe that
\begin{equation}
\label{6a}
\begin{aligned}
\sup_{t\in[0,T]} E_{3,1}(t)&\leq cT \|\ji x\jd^m \,\partial_x^3(|u|^{\alpha}\partial_xu)\|_{L^{\infty}_TL^2_x}\\
&\leq c T \,\sum_{j=0}^3\,\|\ji x\jd^m \,\partial_x^j(|u|^{\alpha})\partial^{4-j}_xu\|_{L^{\infty}_TL^2_x}\\
&\equiv c T \,\sum_{j=0}^3E_{3,1, j}.
\end{aligned}
\end{equation}

It will suffice to consider $E_{3,1,0} $ and $E_{3,1,3}$ in \eqref{6a}. Thus
\begin{equation}
\label{7a}
\begin{aligned}
E_{3,1,0} &\leq  c\,T \| \ji x\jd^{m}|u|\|_{L^{\infty}_TL^{\infty}_x}^{\alpha} \,\|\ji x\jd^{(m-m\alpha)}\,\partial_x^4u\|_{L^{\infty}_TL^2_x}\\
&\leq  c\,T \| \ji x\jd^m|u|\|_{L^{\infty}_TL^{\infty}_x}^{\alpha} \,\|\ji x\jd^{(m-2)}\,\partial_x^4u\|_{L^{\infty}_TL^2_x}.
\end{aligned}
\end{equation}

From the interpolation estimates in section 2 one sees that the last term in \eqref{7a} in fact
$$
\| \ji x\jd^j\,\partial_x^{m+2-j}u\|_2,\;\;\;\;\;\;j=0,1,...,m,
$$
can be interpolated between the cases : $j=0$ and $j=m$, i.e.
$$
\| \partial_x^2u\|_{m,2}\;\;\;\;\;\;\;\;\text{and}\;\;\;\;\;\;\;\| \ji x\jd^{m}\,\partial_x^2u\|_2,
$$
which provides the bound for the last term in \eqref{7a}. $E_{3,1,3}$ can be bounded as
\begin{equation}
\label{8a}
\begin{aligned}
E_{3,1,3} &\leq  c\, \| \ji x\jd^m\,\partial_x^3(|u|^{\alpha})\,\partial_xu\|_{L^{\infty}_TL^2_x}\\
&\leq  c\, \| \ji x\jd^{m}\,|u|^{\alpha-3}(\partial_xu)^3\partial_xu\|_{L^{\infty}_TL^2_x}\\
&\hskip10pt+c\, \| \ji x\jd^{m}\,|u|^{\alpha-2}\partial_x^2u\,\partial_xu\, \partial_xu\|_{L^{\infty}_TL^2_x}\\
&\hskip10pt+c\,\| \ji x\jd^{m}\,|u|^{\alpha-1}\partial_x^3u\,\partial_xu\|_{L^{\infty}_TL^2_x}\\
&\equiv c\,\sum_{j=1}^3\,E_{3,1,3,j}.
\end{aligned}
\end{equation}
Thus, from \eqref{gdnls-11} it follows that
$$
\aligned
E_{3,1,3,1}&\leq c\| \ji x\jd^{m+(3-\alpha)m}\,|\partial_xu|^4\|_{L^{\infty}_TL^2_x}\\
&\leq c\|\ji x\jd^{4m-1} |\partial_xu|^4\|_{L^{\infty}_TL^{\infty}_x}\\
&\leq c\| \ji x\jd^{m}\,\partial_xu\|_{L^{\infty}_TL^{\infty}_x}^4,
\endaligned
$$

$$
\aligned
E_{3,1,3,2}&\leq c\| \ji x\jd^{m+(2-\alpha)m}\,|\partial_xu|^2\partial_x^2u\|_{L^{\infty}_TL^2_x}\\
&\leq c\| \ji x\jd^{m}\,\partial_xu\|_{L^{\infty}_TL^{\infty}_x}^2\| \ji x\jd^{m}\,\partial_x^2u\|_{L^{\infty}_TL^{2}_x},
\endaligned
$$

and

$$
\aligned
E_{3,1,2,3}&\leq c\| \ji x\jd^{m+(1-\alpha)m}\,\partial_x^3u\,\partial_xu\|_{L^{\infty}_TL^{2}_x}\\
&\leq c\| \ji x\jd^{2m-2}\,\partial_x^3u\,\partial_xu\|_{L^{\infty}_TL^{2}_x}\\
&\leq c\| \ji x\jd^{m}\,\partial_xu\|_{L^{\infty}_TL^{\infty}_x}
\|\ji x\jd^{m-2}\,\partial_x^3u\|_{L^{\infty}_TL^{2}_x},
\endaligned
$$
which can be controlled by Sobolev embedding.

It remains to bound $\,E_{3,2}$ in \eqref{aa6}. Thus,
\begin{equation}
\label{a10}
\begin{aligned}
\sup_{t\in[0,T]}E_{3,2}(t)&= \sup_{t\in[0,T]}\int_0^t\ji t-t'\jd^m\|\partial_x^{m+3}(|u|^{\alpha}\partial_xu)\|_2(t')\,dt'\\
&\leq c T^{1/2} (1+T^m)^{1/2}\,\|\partial_x^{m+3}(|u|^{\alpha}\partial_xu)\|_{L^{2}_TL^2_x}\\&\leq cT^{1/2}\,\sum_{j=0}^{m+3}\|\partial_x^j(|u|^{\alpha})\partial_x^{m+4-j}u\|_{L^{2}_TL^2_x}\\
&\equiv cT ^{1/2}\sum_{j=0}^{m+3}E_{3,2,j}.
\end{aligned}
\end{equation}

As before it will suffice to consider the terms $E_{3,2,0}$ and $E_{3,2,m+3}$ in \eqref{a10}.

Thus, since $s=k+1/2=m+3+1/2$ one has that $s+1/2=m+4$ and 
\begin{equation}
\label{a11}
\begin{aligned}
E_{3,2,0}&=\||u|^{\alpha}\partial_x^{s+1/2}u\|_{L^{2}_xL^2_T}\\
&\leq c\| \,|u|^{\alpha}\|_{L^2_xL^{\infty}_T} \|\partial_x^{s+1/2}u\|_{L^{\infty}_xL^2_T}\\
&\leq c\| \,\ji x\jd\,|u|^{\alpha}\|_{L^{\infty}_TL^{\infty}_x} \|\partial_x^{s+1/2}u\|_{L^{\infty}_xL^2_T}\\
&\leq c\| \,\ji x\jd^{1/\alpha}\,|u|\|_{L^{\infty}_TL^{\infty}_x} ^{\alpha}\|\partial_x^{s+1/2}u\|_{L^{\infty}_xL^2_T}\\
&\leq c\| \,\ji x\jd^{m}\,|u|\|_{L^{\infty}_TL^{\infty}_x} ^{\alpha}\|\partial_x^{s+1/2}u\|_{L^{\infty}_xL^2_T}.
\end{aligned}
\end{equation}
Using  the fact that $\,1/\alpha<m\,$ one gets the appropriate bound for the term $E_{3,2,0}$ in \eqref{a11}.

Finally, we recall that $k=m+3$ thus the bound of the term $E_{3,2,m+3}$, which involves a $L^2_xL^2_T$-norm is similar, in fact simpler, that that carried out in \eqref{gdnls-19}
for the term $A_k$ involving an estimated in  the $\,L^1_xL^2_T$-norm. This completes the estimate of the term $E_3$ in \eqref{gdnls-23}. The proof for the term $E_1$ in \eqref{gdnls-23} is similar, in fact, simpler, hence it will be omitted.

Now we shall estimate 
\begin{equation}\label{gdnls-123}
  \| \ji x\jd^{m} \Phi(u)\|_{L^{\infty}_TL^{\infty}_x}\equiv  E_0.
\end{equation}
and  show that
\begin{equation*}
\underset{(x,t)\in\R\times[0,T]}{\rm Inf}  \ji x\jd^m  |\Phi(u)(x,t)|\ge \frac{\lambda}{4}
\end{equation*}
for sufficiently small $T>0$.

We recall that
\begin{equation}\label{gdnls-30}
\Phi(u)(t)=e^{it\p_x^2}u_0+\mu \int\limits_0^t e^{i(t-t')\p_x^2} (|u|^{\a}\p_xu)(t')\,dt'.
\end{equation}

First, we observe that for $t\in[0,T]$ the Sobolev embedding yields the estimate
\begin{equation}\label{gdnls-31}
\|e^{it\p_x^2}u_0-u_0\|_{\infty}=\| \int\limits_0^t \frac{d}{d\tau} \Big(e^{i\tau\p_x^2}u_0\Big)\,d\tau\|_{\infty}=\|\int_0^te^{i\tau\p_x^2}\partial_x^2u_0\,d\tau\|_{\infty}.
\end{equation}

Thus, combining \eqref{gdnls-31} and \eqref{a1} it follows that
\begin{equation}
\label{q1}
\begin{aligned}
\| \ji x\jd^m e^{it\p_x^2}u_0\|_{\infty}&\leq\| \ji x\jd^m (e^{it\p_x^2}u_0-u_0)\|_{\infty} + \|\ji x\jd^m u_0\|_{\infty}\\
&\leq c\| \ji x\jd^m \int_0^t\frac{d\;}{d\tau}e^{i\tau\p_x^2}u_0\,d\tau\|_{\infty}+c \|\ji x\jd^m u_0\|_{\infty}\\
&\leq c \| \ji x\jd^m \int_0^t\,e^{i\tau\p_x^2}\partial_x^2u_0\,d\tau\|_{\infty}+ c\|\ji x\jd^m u_0\|_{\infty}\\
&\leq ct\sup_{\tau\in[0,t]}\| \ji x\jd^m e^{i\tau\p_x^2}\partial_x^2u_0\|_{1,2}+ c\|\ji x\jd^m u_0\|_{\infty}\\
&\leq c t (\|  \ji x\jd^m\partial_x^{2}u_0\|_{1,2}+ \ji t\jd^m\| \partial_x^2u_0\|_{m+1,2})\\
&\hskip10pt + c\|\ji x\jd^m u_0\|_{\infty} \\
&\leq c_T\,t\,\delta,
\end{aligned}
\end{equation}

and 
\begin{equation}\label{gdnls-32}
\begin{split}
\| \ji x\jd^m (e^{it\p_x^2}u_0-u_0)\|_{\infty}&=\| \int\limits_0^t \frac{d}{d\tau} \Big( \ji x\jd^m e^{it\p_x^2}u_0(x)\Big)\,d\tau\|_{\infty}\\
&\le t\Big(\| \ji x\jd^m \partial_x^2u_0\|_{1,2}+ c_T \|\p_x^{m+2}u_0\|_2\Big)\\
&\le c\,t\delta,
\end{split}
\end{equation}
since $s=k+1/2$ and $k\ge m+3$. Hence, for any $(x,t)\in \R\times[0,T]$
\begin{equation}\label{gdnls-33}
\begin{split}
\big |\ji x\jd^m e^{it\p_x^2}u_0(x)\big| &\ge |  \ji x\jd^m u_0(x)| - |\ji x\jd^m (e^{it\p_x^2}u_0-u_0)(x)|\\
\\
&\ge \lambda- c\,T\delta\geq \lambda/2,
\end{split}
\end{equation}
for $\,T$ sufficiently small. Finally, we need to estimate
\begin{equation}\label{gdnls-34}
\begin{split}
B_1&\equiv \| \ji x\jd^m\int\limits_0^t e^{i(t-t')\p_x^2} (|u|^{\a}\p_x u)(t')\,dt' \|_{\infty}\\
&\le c \|  \ji x\jd^m\int\limits_0^t e^{i(t-t')\p_x^2} (|u|^{\a}\p_x u)(t')\,dt'\|_{1,2}\\
&\le c T\big(\| \ji x\jd^m\p_x(|u|^{\a}\p_x u)\|_{L^{\infty}_TL^2_x}
+\| \ji x\jd^m  |u|^{\a}\p_x u \|_{L^{\infty}_TL^2_x}\\
&\quad+ c_T\| |u|^{\a}\p_x u\|_{L^{\infty}_TL^2_x}
+c_T\| \p_x^{m+1}( |u|^{\a}\p_x u)\|_{L^{\infty}_TL^2_x}\big).
\end{split}
\end{equation}

But since $m+3=k$ all these terms in the right hand side of \eqref{gdnls-34} have been already bounded. Hence using the factor $T$ in the front we have that
\begin{equation}\label{gdnls-35}
B_1\le c T ((c\delta)^{\a}\,c\delta +c_T (\delta+\delta^k) c\delta).
\end{equation}
Therefore, combining \eqref{gdnls-30}-\eqref{gdnls-34}  for $T$ sufficiently small depending on $\lambda$ and $\delta$, we deduce that
\begin{equation}\label{gdnls-36}
\begin{split}
\underset{(x,t)\in\R\times[0,T]}{\rm Inf} \ji x\jd^m |\Phi(u)(x,t)| &\ge \frac{\lambda}{2}- c T ((c\delta)^{\a}\,c\delta +c_T (\delta+\delta^k) c\delta)\\
&\ge \frac{\lambda}{4}.
\end{split}
\end{equation}

Gathering the above information and recalling the notation
\begin{equation}\label{gdnls-37}
\begin{aligned}
\tres v\tres_{_T} &= \|v(t)\|_{L^{\infty}_TH^s} +\|\p_x^{k+1} v\|_{L^{\infty}_xL^2_T}\\
& +\|\ji x\jd^m\,u\|_{L^{\infty}_TL^{\infty}_x}+\sum_{j=0}^3\|\ji x\jd^m \partial_x^{j+1}u\|_{L^{\infty}_TL^2_x}
 \end{aligned}
\end{equation}
one has 
\begin{equation}\label{gdnls-38}
\tres \Phi(u)\tres_{_T} \le c\delta + c\delta^{\a}(c \delta)+cT^{1/2}(1+T^{1/2})(\delta^{1+\alpha}+\delta^k)\le 2c\delta
\end{equation}
if 
\begin{equation}\label{gdnls-39}
 c\delta^{\a}+cT^{1/2}(1+T^{1/2})(\delta^{\alpha}+\delta^{k-1})\ll 1.
\end{equation}

It remains to show that the operator $\Phi(u)$ defined in \eqref{gdnls-5} is a contraction in the $\tres\cdot\tres_{_T}$-norm.
\vskip.1in

We observe as before that 
\begin{equation}\label{cont-1}
\begin{split}
\underset{[0,T]} \sup\,\|&\Phi(u)(t)-\Phi(v)(t)\|_{s,2}\\
&\le \| \Phi(u)-\Phi(v)\|_{L^{\infty}_TL^2_x}+ \| D^s( \Phi(u)-\Phi(v))\|_{L^{\infty}_TL^2_x}.
\end{split}
\end{equation}

Using the smoothing effects \eqref{gdnls-4} we have
\begin{equation}\label{cont-2}
\begin{split}
\| D^s( &\Phi(u)-\Phi(v))\|_{L^{\infty}_TL^2_x}+\|\p_x^{k+1}(\Phi(u)-\Phi(v))\|_{L^{\infty}_xL^2_T}\\
&\le  c\|\p_x^k(|u|^{\a}\p_xu-|v|^{\a}\p_xv)\|_{L^1_xL^2_T}\\
&\le   c\||u|^{\a}\p_x^{k+1}u-|v|^{\a}\p_x^{k+1}v\|_{L^1_xL^2_T}\\
&\hskip10pt+ \||u|^{\a-k}(\p_xu)^{k+1}-|v|^{\a-k}(\p_x v)^{k+1}\|_{L^1_xL^2_T}\\
&\hskip10pt+\underset{j=1}{\overset{k-1}{\sum}}c_j\|\p_x^j(|u|^{\a})\p_x^{k+1-j}u-\p_x^j(|v|^{\a})\p_x^{k+1-j}v\|_{L^1_xL^2_T}\\
&\hskip10pt + \underset{j=1}{\overset{k-1}{\sum}}\, \widetilde{A}_{j,1}
 \end{split}
\end{equation}
where the terms  $\widetilde{A}_{j,1}$ are the corresponding ones to $A_{j,1}$ in \eqref{gdnls-19} and which will be estimated using interpolation.


From \eqref{a1} and \eqref{a4} it follows that
\begin{equation}\label{cont-3}
\begin{split}
\underset{j=1}{\overset{3}{\sum}}\|\ji x\jd^{m}\p_x^j(\Phi(u)-&\Phi(v))\|_{L^{\infty}_TL^2_x}
\le \underset{j=1}{\overset{3}{\sum}}\;\underset{[0,T]}{\sup}\,\widetilde{E}_j 
\end{split}
\end{equation}
where
\begin{equation}
\begin{split}
\widetilde{E}_j &=\int\limits_0^t\|\ji x\jd^m \p_x^j(|u|^{\a}\p_xu-|v|^{\a}\p_xv)\|_2(t')\,dt'\\
&\hskip10pt+\int\limits_0^t \ji t-t'\jd^m \|\p_x^j(|u|^{\a}\p_xu-|v|^{\a}\p_xv)\|_2 (t')\,dt'\\
&\hskip10pt+\int\limits_0^t \ji t-t'\jd^m \|\p_x^{m+j}(|u|^{\a}\p_xu-|v|^{\a}\p_xv)\|_2 (t')\,dt'.
\end{split}
\end{equation}

As in \eqref{gdnls-23} we will only consider the most difficult terms to bound $\widetilde{E}_3$ above. More precisely,
\begin{equation}
\widetilde{E}_{3,1,0}=\|\ji x\jd^m(|u|^{\a}\p_x^4u-|v|^{\a}\p_x^4v)\|_{L^{\infty}_TL^2_x}
\end{equation}
and
\begin{equation}
\widetilde{E}_{3,2,0}=\|\ji x\jd^m(|u|^{\a}\p_x^{m+4}u-|v|^{\a}\p_x^{m+4}v)\|_{L^2_TL^2_x}.
\end{equation}

To estimate $\|\ji x\jd^m (\Phi(u)-\Phi(v))\|_{L^{\infty}_TL^{\infty}_x}$ we use the Sobolev embedding and  \eqref{a1} to obtain
\begin{equation}\label{infty-1}
\begin{split}
\|\ji x\jd^m &(\Phi(u)-\Phi(v))\|_{L^{\infty}_TL^{\infty}_x} \\
&\le c\|\ji x\jd^m (|u|^{\a}\p_xu-|v|^{\a}\p_xv)\|_{L^1_TL^2_x}\\
&\hskip10pt+ c\|\ji x\jd^m \p_x (|u|^{\a}\p_xu-|v|^{\a}\p_xv)\|_{L^1_TL^2_x}\\
&\hskip10pt+ cT(1+T^m) \||u|^{\a}\p_xu-|v|^{\a}\p_xv\|_{L^{\infty}_TL^2_x}\\
&\hskip10pt+ cT^{1/2} (1+T^m) \|\p_x^{m+1} ( |u|^{\a}\p_xu-|v|^{\a}\p_xv)\|_{L^2_TL^2_x}.
\end{split}
\end{equation}

Hence, with the exception of $ \||u|^{\a}\p_xu-|v|^{\a}\p_xv\|_{L^{\infty}_TL^2_x}$  in \eqref{infty-1}, to estimate \eqref{cont-2}, \eqref{cont-3} and \eqref{infty-1}, we are reduce to consider the following terms
\begin{equation}\label{gdnls-40}
F_1= \| |u|^{\a}\p_x^{k+1}u -|v|^{\a}\p_x^{k+1}v\|_{L^1_xL^2_T},
\end{equation}
\begin{equation}\label{gdnls-41}
F_2= \| |u|^{\a-k}(\p_x u)^{k+1} -|v|^{\a-k}(\p_x v)^{k+1}\|_{L^1_xL^2_T},
\end{equation}
and
\begin{equation}\label{gdnls-42}
F_3= \| \ji x\jd^m \big(|u|^{\a}\p_x u -|v|^{\a}\p_x v\big)\|_{L^1_TL^2_x}.
\end{equation}

We shall use that if $x,y>0$, then there exist  $\widehat{\theta}\in(0,1) $ such that
\begin{equation*}
\big| x^{\beta} -y^{\beta}\big|= \beta |\widehat{\theta} x-(1- \widehat{\theta})y|^{\beta-1}|x-y|, \hskip15pt \beta\in\R.
\end{equation*}

In particular,  if $z, w\in \C$,
\begin{equation}\label{gdnls-43}
\begin{split}
\big| |z|^{\beta}- |w|^{\beta} \big| &= \beta\big|\,\widehat{\theta} |z|-(1- \widehat{\theta}\,)|w|\big|^{\beta-1}\big| |z|-|w|\big|\\
&\le \beta\,\big|\,\widehat{\theta}\, |z|+ (1- \widehat{\theta}\,)\,|w|\big|^{\beta-1}\, |z-w|.
\end{split}
\end{equation}

Then
\begin{equation}\label{gdnls-44}
\begin{split}
F_1 &\le \| |u|^{\a}\p_x^{k+1}(u-v)\|_{L^1_xL^2_T}+\| (|u|^{\a}-|v|^{\a})\p_x^{k+1}v\|_{L^1_xL^2_T}\\
&= F_{1,1}+F_{1,2}.
\end{split}
\end{equation}

The estimate for $F_{1,1}$ is similar to that in \eqref{gdnls-12}.

Now 
\begin{equation}\label{gdnls-45}
\begin{split}
&F_{1,2} \le \| \ji x\jd^2 (|u|^{\a}-|v|^{\a})\|_{L^{\infty}_xL^{\infty}_T}\| \p_x^{k+1}v\|_{L^{\infty}_xL^2_T}\\
&\le c_{\a} \,\| \ji x\jd^2 \big|\,\theta_{x,t}|u|+(1-\theta_{x,t})|v|\big|^{\a-1}|u-v| \|_{L^{\infty}_xL^{\infty}_T}\| \p_x^{k+1}v\|_{L^{\infty}_xL^2_T}\\
&= \widetilde{F}_{1,2}
\end{split}
\end{equation}

Since 
\begin{equation}\label{gdnls-46}
\begin{split}
\ji x\jd^m\, |u(x,t)| &\ge \lambda/4\\
\ji x\jd^m \,|v(x,t)| &\ge \lambda/4
\end{split}
\end{equation}
for any $(x,t)\in\R\times[0,T]$, using \eqref{gdnls-43} it follows that
\begin{equation}\label{gdnls-47}
\begin{split}
\widetilde{F}_{1,2} &\le  c_{a}\|  \ji x\jd^2 \ji x\jd^{m(1-\a)}|u-v|\|_{L^{\infty}_xL^{\infty}_T}\| \p_x^{k+1}v\|_{L^{\infty}_xL^2_T}\\
&\le \,c_{\a,\lambda} \|  \ji x\jd^m |u-v|\|_{L^{\infty}_xL^{\infty}_T}\| \p_x^{k+1}v\|_{L^{\infty}_xL^2_T}
\end{split}
\end{equation}
which yields the desired estimate for $F_1$.

Next consider $F_2$ in \eqref{gdnls-41}
\begin{equation}\label{gdnls-48}
\begin{split}
F_2 &\le  \| |u|^{\a-k}\big((\p_x u)^{k+1} -(\p_x v)^{k+1}\big)\|_{L^1_xL^2_T}\\
&\hskip10pt +\| \big(|u|^{\a-k} -|v|^{\a-k}\big)(\p_x v)^{k+1}\|_{L^1_xL^2_T}\\
&= F_{2,1}+ F_{2,2}.
\end{split}
\end{equation}

where $F_{2,1}$ was basically estimated in \eqref{gdnls-20}
So we consider $F_{2,2}$. From \eqref{gdnls-43}-\eqref{gdnls-46}
\begin{equation}\label{gdnls-49}
\begin{split}
&F_{2,2} =\| \big(|u|^{\a-k} -|v|^{\a-k}\big)(\p_x v)^{k+1}\|_{L^1_xL^2_T}\\
&\le \| \ji x\jd \big(|u|^{\a-k} -|v|^{\a-k}\big)(\p_x v)^{k+1}\|_{L^2_TL^2_x}\\
&\le \,c_{\a,\lambda}\, \| \ji x\jd \ji x\jd ^{m(k+1-\a)} |u-v|(\p_x v)^{k+1}\|_{L^2_TL^2_x}\\
&\le \,c_{\a,\lambda}\, T^{1/2}\|  \ji x\jd ^{m}\p_x v \|_{L^{\infty}_TL^{\infty}_x}^k
\| \p_x v\|_{L^{\infty}_TL^2_x}
\| \ji x\jd ^{m} |u-v| \|_{L^{\infty}_TL^{\infty}_x}
\end{split}
\end{equation}

Finally, we consider $F_3$ in \eqref{gdnls-42},
\begin{equation}\label{gdnls-50}
\begin{split}
F_3&\le \| \ji x\jd^m |u|^{\a}\p_x (u-v)\|_{L^1_TL^2_x}+\| \ji x\jd^m (|u|^{\a}-|v|^{\a})\p_x v\|_{L^1_TL^2_x}\\
&= F_{3,1}+ F_{3,2}
\end{split}
\end{equation}
Notice that
\begin{equation}
 \| \ji x\jd^m |u|^{\a}\p_x (u-v)\|_{L^1_TL^2_x}\le  c\,T\|u\|^{\alpha}_{L^{\infty}_TL^{\infty}_x}\| \ji x\jd^m \p_x (u-v)\|_{L^{\infty}_TL^2_x}.
 \end{equation}
Thus using \eqref{gdnls-43} one finds that
\begin{equation}\label{gdnls-51}
\begin{split}
F_{3,2} &= \| \ji x\jd^m (|u|^{\a}-|v|^{\a})\p_x v\|_{L^1_xL^2_T}\\
& \le  \| \ji x\jd^m  \ji x\jd^{m(1-\a)}|u-v| \p_x v\|_{L^1_xL^2_T}\\
&\le \,c_{\a,\lambda}\, T \|\ji x\jd^m |u-v| \ji x\jd^{m(1-\a)}\p_x v\|_{L^{\infty}_TL^2_x}\\
&\le \,c_{\a,\lambda} T  \|\ji x\jd^m |u-v| \|_{L^{\infty}_TL^{\infty}_x} \|  \ji x\jd^m \p_x v\|_{L^{\infty}_TL^2_x}.
\end{split}
\end{equation}

To end the proof we need to estimate $\|(\Phi(u)-\Phi(v))(t)\|_{L^{\infty}_TL^2_x}$ in \eqref{cont-1} and the third term on the right hand side of \eqref{infty-1}. Since the argument is similar we just prove the estimate for the first one.  Thus using \eqref{gdnls-43} it follows that
\begin{equation}
\begin{split}
&\|(\Phi(u)-\Phi(v))(t)\|_{L^{\infty}_TL^2_x}\\
&\le c\,T\||u|^{\a}-|v|^{\a}\|_{L^{\infty}_TL^{\infty}_x}\|\p_xu\|_{L^{\infty}_TL^2_x}\!\!+c\,T \|v\|_{L^{\infty}_TL^{\infty}_x}^{\a}\|\p_x(u-v)\|_{L^{\infty}_TL^2_x}\\
&\le c\,T \| \,\big|\theta_{x,t}|u|+(1-\theta_{x,t})|v|\big|^{\a-1}|u-v|\,\|_{L^{\infty}_TL^{\infty}_x}\|\p_xu\|_{L^{\infty}_TL^2_x}\\
&\hskip10pt+c\,T \|v\|_{L^{\infty}_TL^{\infty}_x}^{\a}\|\p_x(u-v)\|_{L^{\infty}_TL^2_x}\\
&\le c\,T\|\ji x\jd^{m(1-\a)}|u-v|\|_{L^{\infty}_TL^{\infty}_x}\|\p_xu\|_{L^{\infty}_TL^2_x}\\
&\hskip10pt + c\,T \|v\|_{L^{\infty}_TL^{\infty}_x}^{\a}\|\p_x(u-v)\|_{L^{\infty}_TL^2_x}\\
&\le c\,T\|\ji x\jd^{m}|u-v|\|_{L^{\infty}_TL^{\infty}_x}\|\p_xu\|_{L^{\infty}_TL^2_x}\\
&\hskip10pt+ c\,T \|v\|_{L^{\infty}_TL^{\infty}_x}^{\a}\|\p_x(u-v)\|_{L^{\infty}_TL^2_x}.\\
\end{split}
\end{equation}

Using the notation \eqref{gdnls-37} and the estimates above we obtain
\begin{equation}\label{gdnls-53} 
\begin{split}
\tres \Phi(u)-\Phi (v) \tres \le  c_{T} (\delta^{\a}+\delta^k+\delta)\tres u-v\tres \le c \delta^{\a} \tres u-v \tres.
\end{split}
\end{equation}

From \eqref{gdnls-39},  $c \delta^{\a}\ll 1$, then it follows that $\Phi$ defines a contraction.  Thus we can conclude that there exists a unique fixed point solving our problem. 

Since the remainder of the proof is  standard we will omit it.

\vspace{.5cm}

\end{document}